\newtheorem{theorem}{Theorem}
\newtheorem{proposition}{Proposition}
\newtheorem{corollary}{Corollary}
\newtheorem{example}{Example}
\title{On the Tensor product of archimedean $d$-algebras}
\author{Mohamed Amine BEN AMOR \\{\small Research Laboratory of Algebra, Topology, Arithmetic, and Order}\\{\small Department of Mathematics}\\{\small Faculty of Mathematical, Physical and Natural Sciences of Tunis}\\{\small Tunis-El Manar University, 2092-El Manar, Tunisia}\\ {} \\{\small Gosaef, Faculté des Sciences de Tunis, 2092 Manar II-Tunis, Tunisia.}}
\date{}
\begin{document}
\maketitle
\begin{abstract}
The aim of this work is to prove that the Riesz tensor product of two archimedean $d$-algebras is itself a $d$-algebra.
\end{abstract}

\section{Introduction}
Since Fremlin introduce the Riesz Tensor product of two vector lattices in \cite{FremlinTens}, several authors refined his construction, we cite namely \cite{GrobLabau, GrobLabau2, Schaefer}.

\noindent Recently, the Riesz Tensor product regain interest with the work of  Azouzi, Ben Amor and Jaber (see \cite{AzouziAmineJaber}) and separately Buskes and Wicksted (see \cite{Buskesfalg}). In fact, they proved  that the Riesz (Fremlin) tensor product of archimedean $f$-algebras is itself an $f$-algebra. The aim of this paper is to prove that the Riesz tensor product of archimedean $d$-algebras is itself an $d$-algebra.

\section{Preliminaries}
\noindent A Riesz space $A$ is called a lattice ordered algebra (briefly an $\ell$-algebra), if $A$ is an associative algebra such that the multiplication and the order structure are compatible. That is if $a$ and $b$ are positives in the $\ell$-algebra $A$, then so is $ab$.

\noindent The $\ell$-algebra $A$ is called $d$-algebra after Birkhoff and Pierce in \cite{BirkhoffPierce} if for every $a$ and $b$ in $A$ and every positive element $c$ in $ A$ we have \begin{align*}
(a\vee b)c =  ac\vee bc \\
c(a\vee b)=  ca\vee cb
\end{align*}
Which means that multiplication by a positive element is a lattice homomorphism.

\noindent The $\ell$-algebra  $A$ is called an almost-$f$-algebra, if for every $a$ and $b$ in $A$  we have \[
a\wedge b=0  \Longrightarrow  a\wedge b  =0 \] 

\noindent The $\ell$-algebra  $A$ is called $f$-algebra, if for every $a$ and $b$ in $A$ and every positive element $c$ in  $ A$ we have \[
a\wedge b=0  \Longrightarrow  ac\wedge b = ca\wedge b =0 \]
This definition goes back another time to the fundamental paper \cite{BirkhoffPierce} of Birkhoff and Pierce. It is quite obvious that all $f$-algebras are  $d$-algebras but the converse is far from being true as shows the next example (see \cite{Huijsmans}).

\begin{example}
$\mathbb{R}^2$ equipped with the following multiplication \[ \begin{pmatrix}
   a_1 \\ 
   a_2 \\ 
\end{pmatrix}\begin{pmatrix}
   b_1 \\ 
   b_2 \\ 
\end{pmatrix}= \begin{pmatrix}
   a_1 b_1 \\ 
   a_1 b_2 \\ 
\end{pmatrix}\]is an archimedean $d$-algebra but fails to be an $f$-algebra nor an almost-$f$-algebra.
\end{example}

\noindent It is clear then that $d$-algebras form a wider class of $\ell$-algebras than the class of $f$-algebras. Even $d$-algebras fail to be commutative as shows also the previous example while $f$-algebras as almost-$f$-algebras are, one can ask if other properties of $f$-algebras can be extended to $d$-algebras.

\noindent We recall that the universal completion of a Riesz space $E$ can be equipped with a product that made it an $f$-algebra. Throughout this paper $E^u$ will always denote the universal completion of the Riesz space $E$ and the multiplication on it will be denoted by juxtaposition.

For the elementary theory of vector lattices and $\ell$-algebras and for unexplained terminology we refer to \cite{Aliprantis, bernauHuisjmans, Huijsmans, Zaanen1, Zaanen2}.

\section{Disjointness preserving bilinear maps}

Let $E$, $F$ and $G$ be archimedean Riesz spaces. A bilinear map $b: E\times F \to G$ is called disjointness preserving if \[ \begin{array}{lcl}
x \perp y &\Longrightarrow& b(x,z) \perp b(y,z) \\
u \perp v &\Longrightarrow& b(t,u) \perp b(t,v) \\
\end{array}\]
for all $t\in E$ and $z\in F$.

A positive disjointness preserving bilinear map is called an $\ell$-bimorphism.

A Meyer-type theorem is still valid for order bounded disjointness preserving bilinear maps that is: 
\begin{theorem}[Kusraev and Tabuev]
Let $E$, $F$ and $G$ be archimedean Riesz spaces. Let $b: E\times F \to G$  be  an order bounded disjointness preserving bilinear map then there exist $b^+$, $b^-$ and $|b|$ $\ell$-bimorphisms such that \[\begin{array}{l}
|b(x,y)| =|b(|x|,|y|)| = ||b|(|x|,|y|)| = |b|(|x|,|y|)  \text{ for all } x \in E \text{ and } y\in F\\
b^+(x,y)=(b(x,y))^+ \text{ and } b^-(x,y)=(b(x,y))^- \text{ for all } x \in E^+\text{ and } y\in F^+
\end{array} \]
\end{theorem}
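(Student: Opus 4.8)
The plan is to produce $|b|$, $b^{+}$ and $b^{-}$ by applying the ordinary (linear) Meyer theorem to the partial operators of $b$, and then to deduce the two displayed lines. First I would fix a variable: for each $y\in F$ the linear operator $b(\cdot,y)\colon E\to G$ is order bounded (because $b$ is order bounded) and disjointness preserving (this is precisely the first clause of the definition, read with $z=y$), so the linear Meyer theorem yields a lattice homomorphism $|b(\cdot,y)|\colon E\to G$ with $|b(\cdot,y)|(|x|)=|b(x,y)|$ for all $x\in E$; symmetrically, for each $x\in E$ there is a lattice homomorphism $|b(x,\cdot)|\colon F\to G$ with $|b(x,\cdot)|(|y|)=|b(x,y)|$ for all $y\in F$. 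Restricting to the positive cones, the map $(x,y)\mapsto|b(x,y)|$ on $E^{+}\times F^{+}$ is then additive in each variable separately — this is just the additivity of the positive operators $|b(\cdot,y)|$ and $|b(x,\cdot)|$ — and positively homogeneous, hence it extends uniquely to a bilinear map $|b|\colon E\times F\to G$ with $|b|(x,y)=|b(x,y)|$ whenever $x\in E^{+}$ and $y\in F^{+}$.

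Next I would check that $|b|$ is an $\ell$-bimorphism. Positivity is clear. Since $|b|$ is positive, to see it is disjointness preserving it is enough to check that each partial operator $|b|(\cdot,z)$ (for $z\in F^{+}$) and $|b|(x,\cdot)$ (for $x\in E^{+}$) is a lattice homomorphism, and for a positive operator this reduces to sending disjoint positive elements to disjoint elements; but for $x\perp x'$ in $E^{+}$ one has $|b|(x,z)=|b(x,z)|$ and $|b|(x',z)=|b(x',z)|$, and $b(x,z)\perp b(x',z)$ because $b$ is disjointness preserving, so these are disjoint — and likewise in the other variable. I would then set $b^{+}:=\tfrac12(|b|+b)$ and $b^{-}:=\tfrac12(|b|-b)$; these are bilinear, and on $E^{+}\times F^{+}$ they coincide with $(x,y)\mapsto b(x,y)^{+}$ and $(x,y)\mapsto b(x,y)^{-}$, which are positive and, by the same argument (now using that taking positive or negative parts preserves disjointness), have all their partial operators lattice homomorphisms; hence $b^{+}$ and $b^{-}$ are $\ell$-bimorphisms, with $b=b^{+}-b^{-}$ and $|b|=b^{+}+b^{-}$.

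It remains to read off the identities. For $x\in E^{+}$, $y\in F^{+}$ the equalities $b^{\pm}(x,y)=b(x,y)^{\pm}$ were noted above, which is the second line. For the first line, apply the linear Meyer theorem twice: $|b(x,y)|=|b(\cdot,y)|(|x|)=|b(|x|,y)|$ (the last step because $|x|\in E^{+}$), and then $|b(|x|,y)|=|b(|x|,\cdot)|(|y|)=|b(|x|,|y|)|$; finally $|b(|x|,|y|)|=|b|(|x|,|y|)$ by the construction of $|b|$, and $=||b|(|x|,|y|)|$ because $|b|\ge 0$. This gives $|b(x,y)|=|b(|x|,|y|)|=||b|(|x|,|y|)|=|b|(|x|,|y|)$ for all $x\in E$, $y\in F$, completing the proof.

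I expect the real obstacle to be the first step — the biadditivity of $(x,y)\mapsto|b(x,y)|$ on the positive cones, without which there is no bilinear $|b|$ to speak of. It looks unreasonable at first: for non-disjoint $x_{1},x_{2}\in E^{+}$ the images $b(x_{1},y)$ and $b(x_{2},y)$ need not be disjoint, so there is no term-by-term reason for $|b(x_{1}+x_{2},y)|=|b(x_{1},y)|+|b(x_{2},y)|$. What makes it work is precisely the rigidity of order bounded disjointness preserving \emph{linear} operators encoded in Meyer's theorem — the modulus $|b(\cdot,y)|$ is a genuine additive operator that agrees with $x\mapsto|b(x,y)|$ on $E^{+}$ — and once this is in hand everything else is routine manipulation with lattice homomorphisms.
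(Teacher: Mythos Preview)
The paper does not actually prove this theorem: it states the result and attributes both statement and proof to Kusraev and Tabuev, citing their paper (and a survey) without reproducing the argument. There is therefore no ``paper's own proof'' to compare against.

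That said, your argument is essentially correct and is the natural route: reduce to the linear Meyer theorem on partial operators, use the resulting moduli to see that $(x,y)\mapsto|b(x,y)|$ is biadditive on $E^{+}\times F^{+}$, extend to a bilinear $|b|$, and set $b^{\pm}=\tfrac12(|b|\pm b)$. The one place to tighten is the verification that $b^{+}$ and $b^{-}$ are $\ell$-bimorphisms: you check disjointness preservation only for positive arguments, so you should note explicitly (it is routine) that for a \emph{positive} bilinear map $c$ one has $|c(x,z)|\le c(|x|,|z|)$, so disjointness preservation on the positive cones propagates to all of $E\times F$. With that remark your proof is complete.
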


The previous theorem was proved by Kusraev and Tabuev in \cite{Kusraevtabuev2} and presented without proof in Theorem 33.2 in \cite{bubuskkus}.  The next theorem also stated and proved by Kusraev and Tabuev in \cite{Kusraevtabuev2}, we present here an alternative proof.
\begin{theorem}\label{dp}
Let $E$, $F$ and $G$ be archimedean Riesz spaces. Let $b: E\times F \to G$  be  an order bounded disjointness preserving bilinear map then there exists an (essentially unique) order bounded disjointness preserving operator $b^\otimes: E\bar{\otimes}F \to G$ such that \[b(x,y)=b^\otimes (x\otimes y)\] for all $x\in E$ and $y\in F$.

Moreover \[(b^+)^\otimes=(b^\otimes)^+,\ (b^-)^\otimes=(b^\otimes)^-\text{ and }(|b^\otimes|)=( |b|)^\otimes\] 
\end{theorem}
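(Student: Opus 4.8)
The plan is to build the operator $b^\otimes$ by invoking the universal property of the Fremlin tensor product $E\bar\otimes F$ together with the Kusraev--Tabuev Meyer-type theorem stated just above. First I would treat the case where $b$ is an $\ell$-bimorphism, i.e. positive and disjointness preserving. By Fremlin's construction, every positive bilinear map $b\colon E\times F\to G$ factors (uniquely) through a positive linear operator $T_b\colon E\bar\otimes F\to G$ with $b(x,y)=T_b(x\otimes y)$; this is the defining universal property of the Riesz tensor product and needs no proof here. So the content is to show that when $b$ is in addition disjointness preserving, the induced positive operator $T_b$ is disjointness preserving. The natural route is to use the lattice-algebraic description of $E\bar\otimes F$: it is generated as a Riesz space by the simple tensors $x\otimes y$, and in fact (Fremlin) every element of $E\bar\otimes F$ can be squeezed between suitable finite suprema and infima of simple tensors $|x|\otimes|y|$. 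Using that multiplication/tensoring distributes over $\vee$ and $\wedge$ (the bimorphism identities $(x_1\vee x_2)\otimes y=(x_1\otimes y)\vee(x_2\otimes y)$, valid for $y\ge 0$, and their duals), I would show that a disjoint pair $u\perp v$ in $E\bar\otimes F$ can be approximated, in the sense of Fremlin's squeeze lemma, by expressions built from disjoint simple tensors, on which $T_b$ is disjointness preserving because $b$ is. Passing to the limit (using that $G$ is archimedean, so that squeezing by $\varepsilon$-multiples forces equality) gives $T_b u\perp T_b v$. Set $b^\otimes:=T_b$ in this case.

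Second, I would handle the general order bounded disjointness preserving $b$ by the Kusraev--Tabuev decomposition: write $b=b^+-b^-$ with $b^+,b^-,|b|$ all $\ell$-bimorphisms satisfying the stated identities. By the first step we get $\ell$-homomorphisms $(b^+)^\otimes$ and $(b^-)^\otimes$ on $E\bar\otimes F$, and I would \emph{define} $b^\otimes:=(b^+)^\otimes-(b^-)^\otimes$. This is order bounded (difference of positive operators) and agrees with $b$ on simple tensors. To see it is disjointness preserving, I would run the same squeeze argument as in step one, but now using the pointwise identity $|b(x,y)|=|b|(|x|,|y|)$ from the Meyer-type theorem: this shows that $|b^\otimes(\,\cdot\,)|$ and $|b|^\otimes(|\,\cdot\,|)$ coincide on simple tensors and hence, by the generation/squeeze argument, on all of $E\bar\otimes F$, which gives both $|b^\otimes|=|b|^\otimes$ and the disjointness-preserving property at once (an operator $S$ is disjointness preserving iff $|S|$ is an $\ell$-homomorphism, and $|b|^\otimes$ is one by step one). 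Essential uniqueness is immediate: two such operators agree on the simple tensors, hence on the sublattice they generate, hence (both being order bounded on an archimedean space) everywhere they are jointly defined; and the relations $(b^\pm)^\otimes=(b^\otimes)^\pm$ follow by comparing positive and negative parts on simple tensors using $b^\pm(x,y)=(b(x,y))^\pm$ for $x,y\ge 0$ and the lattice operations in $E\bar\otimes F$.

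The main obstacle I anticipate is the transition from ``disjointness preserving on simple tensors'' to ``disjointness preserving on all of $E\bar\otimes F$.'' Disjointness is not a closed condition in a naive sense, and a general element of the tensor product is only reachable from simple tensors via Fremlin's order-squeeze (for every $w\in (E\bar\otimes F)^+$ and $\varepsilon>0$ there are $u_i\in E^+$, $v_i\in F^+$ with $\bigvee_i (u_i\otimes v_i) - \varepsilon(\,\cdot\,) \le w \le \bigvee_i (u_i\otimes v_i)$, or the analogous statement), not by norm or order limits in any simple sense. So the argument must be arranged so that the squeeze is compatible with taking absolute values and with the (positive, order bounded) operator $b^\otimes$, and then archimedeanness of $G$ is used to kill the $\varepsilon$ terms. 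I would isolate this as a lemma: if $S\colon E\bar\otimes F\to G$ is order bounded, positive, and $S(x\otimes y)\wedge S(x'\otimes y')=0$ whenever $x\otimes y\perp x'\otimes y'$, then $S$ is an $\ell$-homomorphism --- proved via the density/squeeze description of $E\bar\otimes F$ and the fact that a positive order bounded operator that is a lattice homomorphism on a generating sublattice extends that property. Everything else (defining $b^\otimes$, checking the four displayed identities, uniqueness) is then bookkeeping on simple tensors.
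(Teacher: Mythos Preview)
Your overall architecture---decompose $b=b^+-b^-$ via the Kusraev--Tabuev Meyer-type theorem, lift each $\ell$-bimorphism through Fremlin's universal property to an $\ell$-homomorphism, and set $b^\otimes:=(b^+)^\otimes-(b^-)^\otimes$---is exactly what the paper does. Your step one is correct but redundant: that an $\ell$-bimorphism factors through an $\ell$-\emph{homomorphism} of $E\bar\otimes F$ is already Fremlin's theorem, so no squeeze argument is needed there.

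The genuine gap is in step two, where you propose to prove that $b^\otimes$ is disjointness preserving by extending the identity $|b^\otimes(w)|=|b|^\otimes(|w|)$ from simple tensors to all of $E\bar\otimes F$ ``by the generation/squeeze argument.'' This does not go through as written: ru-density of $E\otimes F$ in $E\bar\otimes F$ lets you extend equalities between \emph{linear} (hence ru-continuous) maps, but both $w\mapsto|b^\otimes(w)|$ and $w\mapsto|b|^\otimes(|w|)$ are nonlinear. Worse, the identity you want to extend is essentially equivalent to the conclusion (it characterises disjointness preservation of $b^\otimes$), so there is no reason it should already hold on all of $E\otimes F$ before you have proved that conclusion. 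Your isolated lemma is stated only for positive $S$, so it does not cover $b^\otimes$ either.

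The paper avoids this by working with a \emph{linear} identity. Writing $S=(b^+)^\otimes$, $T=(b^-)^\otimes$, $U=|b|^\otimes$, one has $S+T=U$ on simple tensors, hence on $E\otimes F$ by linearity, hence on all of $E\bar\otimes F$ by ru-density (this step is legitimate because $S$, $T$, $U$ are linear). From $S+T=U$ with $U$ a lattice homomorphism, the paper then invokes Boulabiar's Lemma~5.2 to conclude that $S-T$ is disjointness preserving; equivalently one can simply observe that $|(S-T)(w)|\le S(|w|)+T(|w|)=U(|w|)$ and use that $U$ is a lattice homomorphism. Only \emph{after} $b^\otimes=S-T$ is known to be disjointness preserving does the paper form $|S-T|$ (now guaranteed to exist by Meyer's theorem), check $|S-T|=U$ on simple tensors, and extend this---again a linear identity---by ru-density to get $|S-T|=S+T$, i.e.\ $S\perp T$. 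That orthogonality is what actually yields $(b^\otimes)^\pm=(b^\pm)^\otimes$; your ``comparing positive and negative parts on simple tensors'' is not enough without it.
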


\begin{proof}
 Let $b: E\times F \to G$  be  an order bounded disjointness preserving bilinear map. According to theorem 3.4 in \cite{Kusraevtabuev2}, there exist three $\ell$-bimorphisms $b^+$, $b^-$ and $|b|$ such that \[b= b^+ - b^- \text{ and } |b|= b^+ + b^- \]From  Theorem in \cite{FremlinTens}, there exist three $\ell$-homomorphisms $T$, $S$ and $U$ from $E\bar{\otimes}F$ to $G$ such that \[\begin{array}{lcl}
 b^+(x,y) & = & S(x\otimes y) \\
 b^-(x,y) & = & T(x\otimes y) \\
 |b|(x,y) & = & U(x\otimes y) 
 \end{array}\]for all $x\in E$ and $y\in F$.
Thus, $S+T=U$ on $E\otimes F$. Since $E\otimes F$ is ru-complete in $E\bar{\otimes} F$ (see theorem in \cite{FremlinTens}) then for any $w\in E \bar{\otimes} F$, there is some $x\in E^+$ and $y\in F^+$ such that for all $\varepsilon >0$, there exists an $v_\varepsilon \in E\otimes F$  such that \[|w-v_\varepsilon | \leq \varepsilon x \otimes y\] This with the fact that $U$ is an $\ell$-homomorphism lead to \[U(|w-v_\varepsilon |)=|U(z)-U(v_\varepsilon)| \leq \varepsilon U(x \otimes y)\]which implies that \[|U(w)-(S+T)(v_\varepsilon)| \leq \varepsilon U(x \otimes y)\]Hence, $(S+T)(v_\varepsilon)$ ru-converges to $U(w)$. But since $(S+T)(v_\varepsilon)$ ru-converges to $(S+T)(w)$, it follows that  $S+T=U$ on $E\bar{\otimes} F$. Using twice lemma 5.2 in \cite{Boulabrecent}, we obtain that $S-T$ is an order bounded disjointness preserving operator. Put $b^\otimes =S-T$. Then $b^\otimes(x\otimes y)=b(x,y) $ for all $x\in E$ and $y\in F$.
Moreover, $|S-T|$ coincides with $U$ on $E\otimes F$, so we can prove using the same techniques of ru-denseness that $U=|S-T|$  on $E\bar{\otimes} F$. It follows from it that $|T-S|=|T+S|$ that is $T\perp  S$. Since $b^\otimes =S-T$, then \[(b^+)^\otimes=(b^\otimes)^+,\ (b^-)^\otimes=(b^\otimes)^-\text{ and }(|b^\otimes|)=( |b|)^\otimes\] and our proof comes to an end.
\end{proof}

\section{The Riesz tensor product of d-algebras}

From now, we shall impose as a blanket assumption that all real Riesz spaces under consideration in the sequel are archimedean. 

The aim of this part is to prove that the tensor product of archimedean $d$-algebra is itself a $d$-algebra. In order to achieve this goal, we will use the next theorem due to Kusraev and Tabuev in \cite{KusraevTabuev}. It states that:
\begin{theorem}[Kusraev and Tabuev]
Every order bounded disjointness preserving bilinear map $b : E \times F \to G$ is
representable as the product
\[b(x, y) = S(x)T(y) \ (x \in E,\  y \in F),\]
where one of the operators $S : E \to G^u$ or $T : F \to G^u$ is order bounded and disjointness preserving, while the other operator is a lattice homomorphism.
\end{theorem}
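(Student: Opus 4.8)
The plan is to turn the bilinear problem into a linear one by means of Fremlin's tensor product, to separate the two variables using the $f$-algebra structure that the Riesz tensor product of universally complete spaces carries, and to read off the factorisation from the multiplicative structure of order bounded disjointness preserving \emph{operators}.

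In more detail, I would proceed as follows. By Theorem~\ref{dp}, $b$ linearises to an order bounded disjointness preserving operator $b^{\otimes}\colon E\bar{\otimes}F\to G$ with $b(x,y)=b^{\otimes}(x\otimes y)$, and moreover $b^{\otimes}=(b^{\otimes})^{+}-(b^{\otimes})^{-}$ with $(b^{\otimes})^{\pm}$ lattice homomorphisms, $(b^{\otimes})^{+}\perp(b^{\otimes})^{-}$, and $h:=|b^{\otimes}|=(b^{\otimes})^{+}+(b^{\otimes})^{-}=|b|^{\otimes}$ a lattice homomorphism of $E\bar{\otimes}F$ into $G$. A patching argument over the laterally complete space $G^{u}$ (the ``local signs'' carried by the disjoint pair $(b^{\otimes})^{+},(b^{\otimes})^{-}$ fit together, because $b^{\otimes}$ is disjointness preserving) then yields a single element $w\in G^{u}$ with $b^{\otimes}(z)=w\cdot h(z)$ for all $z\in E\bar{\otimes}F$. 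Next I would embed $E\subseteq E^{u}$ and $F\subseteq F^{u}$ and regard $E^{u}$, $F^{u}$ as $f$-algebras with multiplicative units $e_{E}$, $e_{F}$; by the theorems quoted in the Introduction (Azouzi--Ben Amor--Jaber and Buskes--Wickstead) the Riesz tensor product $E^{u}\bar{\otimes}F^{u}$ is again an $f$-algebra, with unit $e_{E}\otimes e_{F}$, in which $(u\otimes e_{F})(e_{E}\otimes v)=(ue_{E})\otimes(e_{F}v)=u\otimes v$ and the partial maps $j_{E}\colon u\mapsto u\otimes e_{F}$, $j_{F}\colon v\mapsto e_{E}\otimes v$ are lattice homomorphisms. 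I would then extend $h$ to a lattice and algebra homomorphism $\widehat{h}$ on $E^{u}\bar{\otimes}F^{u}$ (or on a large enough Riesz subalgebra of it) with values in $G^{u}$, using the density of the algebraic tensor product in the Fremlin tensor product and the universal completeness of $G^{u}$, in the spirit of the relative-uniform-density argument in the proof of Theorem~\ref{dp}. Putting $T:=\widehat{h}\circ j_{F}$ and $S\colon x\mapsto w\cdot\widehat{h}\big(j_{E}(x)\big)$, both valued in $G^{u}$, the map $T$ is a lattice homomorphism and $S$, being a weight times a lattice homomorphism, is order bounded and disjointness preserving, while \[b(x,y)=b^{\otimes}(x\otimes y)=w\cdot h\big(j_{E}(x)\,j_{F}(y)\big)=w\cdot\widehat{h}\big(j_{E}(x)\big)\cdot\widehat{h}\big(j_{F}(y)\big)=S(x)\,T(y),\] as required. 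Essential uniqueness of $(S,T)$ follows once more from the ru-density of $E\otimes F$ in $E\bar{\otimes}F$.

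I expect the decisive obstacle to be the extension of $h$ (equivalently of $b^{\otimes}$) from $E\bar{\otimes}F$ across to $E^{u}\bar{\otimes}F^{u}$: lattice homomorphisms need not extend to larger Riesz spaces, so one must make careful use of the precise density of $E\otimes F$ (resp.\ $E^{u}\otimes F^{u}$) inside the Fremlin tensor product together with the lateral completeness of $G^{u}$, and one must check that the multiplicative identity $j_{E}(x)\,j_{F}(y)=x\otimes y$ and the homomorphism property of $j_{E}$, $j_{F}$ survive the extension. A second, smaller difficulty is the patching step producing the global weight $w$: isolating a single $w\in G^{u}$ (rather than a weight depending on the argument) is exactly what confines the weight to \emph{one} of the two variables, and it is here that the order boundedness of $b$ and the extra room gained by enlarging $G$ to $G^{u}$ are used.
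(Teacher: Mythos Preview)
The paper does not prove this theorem: it is quoted verbatim from Kusraev and Tabuev \cite{KusraevTabuev} and used as a black box in the proof of Theorem~\ref{dalg}. There is therefore no argument in the paper to compare your proposal against.

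That said, your sketch has a genuine gap precisely at the step you flag as ``decisive''. The lattice homomorphism $h=|b^{\otimes}|$ is defined only on $E\bar{\otimes}F$, yet to form $\widehat{h}\circ j_{E}$ and $\widehat{h}\circ j_{F}$ you must evaluate it on elements of the form $x\otimes e_{F}$ and $e_{E}\otimes y$, which live in $E^{u}\bar{\otimes}F^{u}$ and typically \emph{not} in $E\bar{\otimes}F$ (nothing guarantees that $E$ or $F$ contain a weak unit, let alone the algebraic units $e_{E},e_{F}$ of their universal completions). The ru-density argument you borrow from the proof of Theorem~\ref{dp} does not bridge this: $E\otimes F$ is ru-dense in $E\bar{\otimes}F$, and $E^{u}\otimes F^{u}$ is ru-dense in $E^{u}\bar{\otimes}F^{u}$, but $E\bar{\otimes}F$ is not ru-dense in $E^{u}\bar{\otimes}F^{u}$. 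One might try order density instead, but extending a lattice homomorphism across an order-dense sublattice into a Dedekind complete target requires order continuity of $h$, which is nowhere in the hypotheses. Without $\widehat{h}$ the separation of variables via $j_{E},j_{F}$ cannot be carried out, and the factorisation $b(x,y)=S(x)T(y)$ is never reached.

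A further, smaller point: the representation $b^{\otimes}(z)=w\cdot h(z)$ with a \emph{single} weight $w\in G^{u}$ is itself a nontrivial theorem (the multiplicative representation of order bounded disjointness preserving operators into a universally complete range); you invoke it as a ``patching argument'' but give no indication of how it is proved or where it is cited from. In Kusraev and Tabuev's original approach this representation, applied to the partial maps $b(\,\cdot\,,y)$ and $b(x,\,\cdot\,)$ directly in $G^{u}$, is in fact the engine of the proof, and the tensor product is not needed at all.
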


We begin our study with this technical proposition
\begin{proposition}
Let $E$, $F$ and $G$ be  Risez spaces.  Let $S$ be in $\mathfrak{L}_b(E)$ and  $T$ be in $\mathfrak{L}_b(F)$,   two order bounded disjointness preserving operators then the order bounded bilinear map. Let $b: E\times F \to G$ be an $\ell$-bimorphism then  \[\begin{array}{lccl}
\mathfrak{b} : &\ E\times F & \to & G\\
& (x,y) & \mapsto & b(S(x), T(y)) \end{array}
\]is order bounded disjointness preserving bilinear map. Moreover \[\begin{array}{l}
\mathfrak{b}^+(x,y)=b(S^+(x), T^+(y))+b(S^-(x),  T^-(y))   \text{ and }\\
 \mathfrak{b}^-(x,y)=b(S^+(x),T^-(y))+b(S^-(x) , T^+(y))\end{array}\] for all $x$ in $E$ and $y$ in $F$.
\end{proposition}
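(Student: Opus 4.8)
The plan is to verify in turn that $\mathfrak{b}$ is bilinear and order bounded, that it is disjointness preserving, and then to compute $\mathfrak{b}^{+}$ and $\mathfrak{b}^{-}$. Bilinearity is immediate from the linearity of $S$, $T$ and the bilinearity of $b$. For order boundedness, $S$ carries each order interval of $E$ into an order bounded set, $T$ does the same in $F$, and $b$, being positive, maps a product of order intervals into an order interval of $G$; composing, $\mathfrak{b}$ sends products of order intervals to order bounded sets. For disjointness preservation, if $x\perp x'$ in $E$ and $z\in F$ then $S(x)\perp S(x')$ because $S$ is disjointness preserving, and then $b(S(x),T(z))\perp b(S(x'),T(z))$ because the $\ell$-bimorphism $b$ preserves disjointness in its first variable; thus $\mathfrak{b}(x,z)\perp\mathfrak{b}(x',z)$. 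The second disjointness condition follows symmetrically from $T$ being disjointness preserving and $b$ preserving disjointness in its second variable.

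For the formulas I would first fix $x\in E^{+}$ and $y\in F^{+}$ and identify the positive and negative parts of $\mathfrak{b}(x,y)$. By Meyer's theorem applied to the order bounded disjointness preserving operators $S$ and $T$, the moduli $|S|$, $|T|$ exist, $S^{\pm}$ and $T^{\pm}$ are lattice homomorphisms, and for positive arguments $S^{+}(x)=(S(x))^{+}$, $S^{-}(x)=(S(x))^{-}$, so in particular $S^{+}(x)\wedge S^{-}(x)=0$, and likewise $T^{+}(y)\wedge T^{-}(y)=0$. Expanding by bilinearity,
\[ \mathfrak{b}(x,y)=b\bigl(S^{+}(x)-S^{-}(x),\,T^{+}(y)-T^{-}(y)\bigr)=(p+s)-(q+r), \]
where $p=b(S^{+}(x),T^{+}(y))$, $q=b(S^{+}(x),T^{-}(y))$, $r=b(S^{-}(x),T^{+}(y))$ and $s=b(S^{-}(x),T^{-}(y))$ are all positive since $b\ge 0$.

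The key point is that $p+s$ and $q+r$ are disjoint. Each of $p,s$ is disjoint from each of $q,r$: for instance $p\perp q$ because $T^{+}(y)\perp T^{-}(y)$ and $b$ preserves disjointness in the second slot, $p\perp r$ because $S^{+}(x)\perp S^{-}(x)$ and $b$ preserves disjointness in the first slot, and the pairs $(s,q)$, $(s,r)$ are treated the same way; since disjointness to a fixed element is inherited by sums in the positive cone, $p+s\perp q+r$. Therefore $(\mathfrak{b}(x,y))^{+}=p+s$ and $(\mathfrak{b}(x,y))^{-}=q+r$, and the Kusraev--Tabuev Meyer-type theorem for bilinear maps recalled above yields $\mathfrak{b}^{+}(x,y)=p+s$, $\mathfrak{b}^{-}(x,y)=q+r$ for all $x\in E^{+}$, $y\in F^{+}$. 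To remove the positivity restriction, observe that both sides of each identity are bilinear in $(x,y)$ — the left sides because $\mathfrak{b}^{\pm}$ are $\ell$-bimorphisms, the right sides because $S^{\pm}$, $T^{\pm}$ are linear and $b$ is bilinear — so, agreeing on $E^{+}\times F^{+}$, they agree on all of $E\times F$.

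There is no serious obstruction in this argument; it simply combines Meyer's theorem for operators with the Meyer-type theorem for bilinear maps quoted in the excerpt. The one step that calls for care is the disjointness $p+s\perp q+r$: one must verify all four cross-disjointness relations and then pass from pairwise disjointness to disjointness of the sums, rather than assuming the latter.
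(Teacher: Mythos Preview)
Your argument is correct. The verification that $\mathfrak{b}$ is order bounded and disjointness preserving is clean, and your pointwise computation of $(\mathfrak{b}(x,y))^{\pm}$ for positive $x,y$ via the four cross-disjointness relations $p\perp q$, $p\perp r$, $s\perp q$, $s\perp r$ is exactly what is needed; the appeal to the Kusraev--Tabuev Meyer-type theorem then legitimately identifies these with $\mathfrak{b}^{\pm}(x,y)$, and the bilinear extension step is sound.

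Your route differs from the paper's. The paper fixes $y\in F$, forms the partial operators $bp_y(x)=b(S^{+}(x),T^{+}(y))+b(S^{-}(x),T^{-}(y))$ and $bm_y(x)=b(S^{+}(x),T^{-}(y))+b(S^{-}(x),T^{+}(y))$, observes that for disjoint $x_1,x_2$ all the elements $b(S^{\pm}(x_i),T^{\pm}(y))$ are mutually disjoint, and then invokes an external lemma (Boulabiar, Lemma~5.2) to deduce at the \emph{operator} level that $bp_y$ and $bm_y$ are disjointness preserving with $bp_y\perp bm_y$; the symmetric argument in the other variable finishes the job. Thus the paper obtains the decomposition $\mathfrak{b}^{+}=bp$, $\mathfrak{b}^{-}=bm$ from orthogonality in $\mathfrak{L}_b$, whereas you obtain it from orthogonality in $G$ together with the bilinear Meyer-type theorem. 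Your approach is more self-contained relative to the material stated in the paper (it needs only Meyer for operators and the Kusraev--Tabuev theorem already quoted, not Boulabiar's lemma), while the paper's operator-level argument packages the four disjointness checks into a single citation and yields the disjointness-preserving property of $\mathfrak{b}$ as a by-product of the decomposition rather than by a separate direct verification.
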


\begin{proof}
 Let $y\in F$ and $bp_y (x)= b(S^+(x),T^+(y))+b(S^-(x),T^-(y))$ and $bm_y(x)=b(S^+(x),T^-(y))+b(S^-(x),T^+(y))$ for all $x$. It is clear that $bp_y$ and $bm_y$ are linear and order bounded.
Pick now $x_1$ and $x_2$ in $E$ such that $|x_1|\wedge |x_2|=0$ . Since $S^\pm(x_i) \perp S^\pm(x_j)$ for all $i$ and $j$ in $\{1, \ 2\}$, then \[b(S^\pm(x_i), T^\pm (y)) \perp b(S^\pm(x_j),T^\pm (y))\] This fact with lemma 5.2 in \cite{Boulabrecent} yield to that  $bp_y$ and $bm_y$  are disjointness preserving and $bp_y \perp bm_y$. We prove  an analogous result by fixing an $x$ in $E$ and considering  $bp_x$ and $bm_x$ which make an end to our prove.
\end{proof}

Since the Riesz tensor product induces an $\ell$-bimorphism on $E\times F$, the next corollary follows immediately from the previous lemma.

\begin{corollary}\label{CorTec}
Let $E$ and $F$  be  Risez spaces.  Let $S$ be in $\mathfrak{L}_b(E)$ and  $T$ be in $\mathfrak{L}_b(F)$,   two order bounded disjointness preserving operators then the order bounded bilinear map then  \[\begin{array}{lccl}
\mathfrak{b} : &\ E\times F & \to & E\bar{\otimes}F\\
& (x,y) & \mapsto & S(x)\otimes T(y) \end{array}
\]is order bounded disjointness preserving bilinear map. Moreover \[\begin{array}{l}
\mathfrak{b}^+(x,y)=S^+(x)\otimes T^+(y)+S^-(x)\otimes  T^-(y)   \text{ and }\\
 \mathfrak{b}^-(x,y)=S^+(x)\otimes T^-(y)+S^-(x)\otimes T^+(y)\end{array}\] for all $x$ in $E$ and $y$ in $F$.
\end{corollary}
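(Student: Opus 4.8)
The plan is to obtain this statement as the special case of the preceding Proposition in which $G=E\bar{\otimes}F$ and the $\ell$-bimorphism is the canonical one attached to the Riesz tensor product. First I would recall from Fremlin's construction (the Theorem of \cite{FremlinTens}) that the bilinear map $\omega:E\times F\to E\bar{\otimes}F$, $(x,y)\mapsto x\otimes y$, is positive and disjointness preserving, hence an $\ell$-bimorphism; this is precisely the remark recorded immediately before the statement. In particular $\omega$ is positive and separately disjointness preserving, so it is order bounded and the hypotheses of the Proposition are satisfied with $b=\omega$.

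Next I would simply instantiate the Proposition with this choice of $b$. The resulting map $\mathfrak{b}(x,y)=\omega(S(x),T(y))=S(x)\otimes T(y)$ is then an order bounded disjointness preserving bilinear map by that Proposition. The formulas for $\mathfrak{b}^{+}$ and $\mathfrak{b}^{-}$ transfer verbatim: substituting $\omega(u,v)=u\otimes v$ into $\mathfrak{b}^{+}(x,y)=b(S^{+}(x),T^{+}(y))+b(S^{-}(x),T^{-}(y))$ and $\mathfrak{b}^{-}(x,y)=b(S^{+}(x),T^{-}(y))+b(S^{-}(x),T^{+}(y))$ yields exactly the two claimed identities for all $x\in E$ and $y\in F$.

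There is essentially no genuine obstacle here beyond checking that the canonical tensor embedding really is an $\ell$-bimorphism, and that is exactly what Fremlin's theorem supplies; everything else is a direct substitution into the Proposition. If one preferred a self-contained argument, one could instead rerun the short proof of the Proposition with $b$ replaced by $\otimes$ throughout, invoking Lemma 5.2 of \cite{Boulabrecent} to pass from $S^{\pm}(x_{i})\perp S^{\pm}(x_{j})$ (and the analogous disjointness in the second variable) to the disjointness of the maps $x\mapsto S^{\pm}(x)\otimes T^{\pm}(y)$ and to the mutual disjointness of the two positive pieces; but quoting the Proposition is the cleaner route.
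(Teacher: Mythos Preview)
Your argument is correct and matches the paper's own route exactly: the corollary is obtained from the preceding Proposition by taking $G=E\bar{\otimes}F$ and $b=\omega$ the canonical Riesz tensor map, which is an $\ell$-bimorphism by Fremlin's theorem. Nothing further is needed.
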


At this point, we gathered all ingredients we need to prove the main theorem of this work.

\begin{theorem}\label{dalg}
Let $(A_1,\star)$ and $(A_2,\star)$ be two $d$-algebras, then the Riesz Tensor product $A_1\bar{\otimes}A_2$ can be equipped with a $d$-algebra product that extends the product on the algebraic tensor product $A_1\otimes A_2$.
\end{theorem}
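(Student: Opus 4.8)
The plan is to build the multiplication on $A_1\bar\otimes A_2$ as a bilinear map and then verify the $d$-algebra axioms using the disjointness-preserving/representation machinery developed above. First I would fix the bilinear multiplication $\star\colon A_i\times A_i\to A_i$ on each factor. On the algebraic tensor product $A_1\otimes A_2$ there is the obvious componentwise product $(x_1\otimes y_1)\cdot(x_2\otimes y_2)=(x_1\star x_2)\otimes(y_1\star y_2)$; the task is to extend this to $A_1\bar\otimes A_2$ so that positivity (hence $\ell$-algebra structure) and the lattice-homomorphism property of multiplication by a positive element survive. To get the extension, I would fix $w\in A_1\bar\otimes A_2$ and consider the map $u\mapsto w\cdot u$: it suffices to show left (and by symmetry right) multiplication by a fixed tensor, and then by a fixed arbitrary element, extends from $A_1\otimes A_2$ to an order bounded disjointness-preserving operator on $A_1\bar\otimes A_2$. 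The key observation is that for fixed $a\in A_1$, $b\in A_2$, left multiplication on each factor, $L_a\colon A_1\to A_1$, $x\mapsto a\star x$ and $L_b\colon A_2\to A_2$, is order bounded (since the product is positive, $|a\star x|\le |a|\star|x|$) and disjointness preserving precisely because $A_i$ is a $d$-algebra (multiplication by a positive element is a lattice homomorphism, and the general case reduces via $a=a^+-a^-$). Hence Corollary~\ref{CorTec} applies to $S=L_a$, $T=L_b$ and gives that $(x,y)\mapsto L_a(x)\otimes L_b(y)=(a\star x)\otimes(b\star y)$ is an order bounded disjointness preserving bilinear map, whose linearization (Theorem~\ref{dp}) is an order bounded disjointness preserving operator $A_1\bar\otimes A_2\to A_1\bar\otimes A_2$ agreeing with left multiplication by $a\otimes b$ on $A_1\otimes A_2$.

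Next I would promote this to a genuine bilinear product on the whole space. Since the tensors $a\otimes b$ span a ru-dense Riesz subspace of $A_1\bar\otimes A_2$, I would define, for $w\in A_1\bar\otimes A_2$, the operator $R_w$ ("right multiplication by $w$") first on elementary tensors $a\otimes b$ by $R_w(a\otimes b):=\mathfrak b^\otimes(a\otimes b)$ where $\mathfrak b(x,y)=(x\star a')\otimes(y\star b')$ when $w=a'\otimes b'$, and extend by the same ru-density argument used in the proof of Theorem~\ref{dp}: an order bounded disjointness preserving operator that is defined consistently on the ru-dense subspace $A_1\otimes A_2$ extends uniquely and remains order bounded and disjointness preserving. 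Bilinearity, associativity and the identity "product on $A_1\bar\otimes A_2$ restricts to the product on $A_1\otimes A_2$" all follow by checking them on elementary tensors (where they hold by construction from the factorwise products) and invoking ru-continuity together with bilinearity/associativity of each $\star$ on $A_i$. This yields a well-defined associative bilinear product $\cdot$ on $A_1\bar\otimes A_2$ extending the algebraic one.

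It remains to check the $d$-algebra axioms. Positivity of the product: if $u,v\ge 0$ in $A_1\bar\otimes A_2$, approximate them in ru-sense by positive elements of $A_1\otimes A_2$ (using that $A_1^+\otimes A_2^+$ generates the positive cone), where $u\cdot v\ge 0$ is immediate from positivity of each $\star$ and of $\otimes$; then pass to the limit. For the lattice-homomorphism property, fix $c\ge 0$ in $A_1\bar\otimes A_2$; I must show $u\mapsto c\cdot u$ and $u\mapsto u\cdot c$ are lattice homomorphisms. By the argument above, left multiplication by $c$ is an order bounded disjointness preserving \emph{positive} operator; a positive disjointness preserving operator on an archimedean Riesz space is a lattice homomorphism (it preserves $|\cdot|$, hence $\vee$ and $\wedge$), so $(a\vee b)\cdot c=(a\cdot c)\vee(b\cdot c)$ and symmetrically. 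Actually the cleanest route is: for fixed $c=c_1\otimes c_2\ge 0$ (so $c_1,c_2\ge 0$) one has $S=L_{c_1}$, $T=L_{c_2}$ are lattice homomorphisms because $A_i$ is a $d$-algebra, so in Corollary~\ref{CorTec} $S^-=T^-=0$ and $\mathfrak b=\mathfrak b^+$ is itself an $\ell$-bimorphism; its linearization is then an $\ell$-homomorphism, i.e. left multiplication by $c_1\otimes c_2$ is a lattice homomorphism. For a general positive $c$ one uses the ru-density of $A_1^+\otimes A_2^+$ and the fact that an ru-limit of lattice homomorphisms is a lattice homomorphism (on archimedean spaces). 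Symmetrically for right multiplication.

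I expect the main obstacle to be the \emph{well-definedness and consistency} of the global product rather than the axioms: one must make sure that the two a priori different recipes — extending left multiplication by $w$ versus right multiplication by $w$ — agree, and that the resulting binary operation is genuinely bilinear and associative on all of $A_1\bar\otimes A_2$, not merely on the dense subspace. The safe way to organise this is to prove a small lemma: if a bilinear map $A_1\otimes A_2\times A_1\otimes A_2\to A_1\bar\otimes A_2$ is separately order bounded disjointness preserving in each ru-complete variable and separately ru-continuous, it extends uniquely to $A_1\bar\otimes A_2\times A_1\bar\otimes A_2$; associativity and the lattice-homomorphism identities are then closed conditions that pass to the extension. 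Everything else — order boundedness, disjointness preservation, positivity — is handed to us by Corollary~\ref{CorTec} and Theorem~\ref{dp} applied to the factorwise left/right multiplications, which are order bounded and disjointness preserving exactly because each $A_i$ is a $d$-algebra.
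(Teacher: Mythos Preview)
Your approach is genuinely different from the paper's. The paper does not iterate one-variable extensions at all; it invokes the Kusraev--Tabuev factorisation of each $d$-algebra product as $x\star y=S_i(x)\,T_i(y)$ with $S_i,T_i\colon A_i\to A_i^{u}$, and passes to the universal completions, where $A_1^{u}\bar\otimes A_2^{u}$ is already an $f$-algebra. Theorem~\ref{dp} is applied once to $(x_1,x_2)\mapsto S_1(x_1)\otimes S_2(x_2)$ and once to $(y_1,y_2)\mapsto T_1(y_1)\otimes T_2(y_2)$, yielding single operators $\bar\sigma,\bar\tau\colon A_1\bar\otimes A_2\to A_1^{u}\bar\otimes A_2^{u}$, and the product is then \emph{defined globally} by $\pi(u,v)=\bar\sigma(u)\,\bar\tau(v)$ using the ambient $f$-algebra multiplication. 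Bilinearity, order boundedness and disjointness preservation of $\pi$ are immediate from this formula; the only remaining task is to show that $\pi$ lands back in $A_1\bar\otimes A_2$, and this uses Fremlin's identification of $A_1\bar\otimes A_2$ with the Riesz subspace of $A_1^{u}\bar\otimes A_2^{u}$ generated by $A_1\otimes A_2$, so every element is a \emph{finite} sup/inf/sum of elementary tensors and no ru-limit is needed.

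Your direct route is plausible but, as written, has two soft spots that the external embedding sidesteps. First, the ``small lemma'' extending a separately order bounded disjointness preserving bilinear map off $(A_1\otimes A_2)\times(A_1\otimes A_2)$ is precisely the crux and is not proved: after linearising in the first variable you still need that, for each fixed non-elementary $u\in A_1\bar\otimes A_2$, the map $(a,b)\mapsto L_{a\otimes b}(u)$ is order bounded and disjointness preserving, so that Theorem~\ref{dp} can be applied a second time; ru-continuity alone is not enough, since for $d\in A_1\otimes A_2$ the operator $L_d$ is merely a \emph{sum} of disjointness preserving operators and you have no bound on $|L_d(u)|$ in terms of $|d|$ alone. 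Second, your positivity and lattice-homomorphism arguments for a general $c\ge 0$ invoke ru-approximation of $c$ by positive elements of $A_1\otimes A_2$; but $A_1\otimes A_2$ is only a linear subspace, not a Riesz subspace, of $A_1\bar\otimes A_2$, so positive elements of the latter need not be ru-limits of positive elements of the former. In the paper's set-up both problems evaporate: the product is an explicit formula in an ambient $f$-algebra, and the finite sup/inf/sum representation replaces all density arguments.
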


\begin{proof}
Since products on $d$-algebras are order bounded disjointness preserving bilinear map, then, according to Corollary 3.3 in \cite{KusraevTabuev}, there exist an $\ell$-homomorphim $S_1$ and an order bounded disjointness preserving operator $T_1$ (respectively $S_2$ and $T_2$) from $A_1$ to its universally completion $A_1^u$ (respectively from $A_2$ to $A_2^u$) such that \[ x_1\star y_1 = S_1(x_1)T_1(y_1)\] for all $x_1, y_1$ in $A_1$. 
Put \[ \begin{array}{rccl}
\sigma:& A_1\times A_2&\longrightarrow & A_1^u\bar{\otimes}A_2^u\\
 & (x_1,x_2)& \longmapsto & S_1(x_1)\otimes S_2(x_2)
\end{array}\]
and
\[ \begin{array}{rccl}
\tau:& A_1\times A_2&\longrightarrow & A_1^u\bar{\otimes}A_2^u\\
 & (y_1,y_2)& \longmapsto & T_1(y_1)\otimes T_2(y_2)
\end{array}\]
From Theorem 4.2 in\cite{FremlinTens} and Theorem \ref{dp}, it follows that there exist an $\ell$-homomorphisms $\bar{\sigma}$ and an order bounded disjointness preserving bilinear map $\bar{\tau}$ such that: \[ \begin{array}{rccl}
\bar{\sigma}:& A_1\bar{\otimes} A_2&\longrightarrow & A_1^u\bar{\otimes}A_2^u\\
\bar{\tau}:& A_1\bar{\otimes} A_2&\longrightarrow & A_1^u\bar{\otimes}A_2^u
\end{array}\]and $\bar{\sigma}(x_1\otimes x_2)=\sigma(x_1,x_2)$ and $\bar{\tau}(y_1\otimes y_2)=\tau(y_1,y_2)$ for every $x_1$ and $y_1$ in $A_1$ and $x_2$ and $y_2$ in $A_2$.

According to Theorem 11 in \cite{AzouziAmineJaber},  $A_1^u\bar{\otimes}A_2^u$ is an $f$-algebra. We can then define an order bounded disjointness preserving bilinear map $\pi$ as: \[ \begin{array}{rccl}
\pi:& A_1\bar{\otimes} A_2 \times A_1\bar{\otimes} A_2&\longrightarrow & A_1^u\bar{\otimes}A_2^u\\
 & (u,v)& \longmapsto & \bar{\sigma}(u)\bar{\tau}(v)
\end{array}\]
We affirm that the range of $\pi$ is included in $A_1\bar{\otimes} A_2$. In fact, $A_1$ and $A_2$ are respectively Riesz subspace of $A_1^u$ and $A_2^u$. Then $ A_1\bar{\otimes} A_2$ is the Riesz subspace of $A_1^u\bar{\otimes}A_2^u$ generated by the algebraic tensor product  $ A_1\otimes A_2$ (Corollary 4.5 in \cite{FremlinTens}). Then, there exist $I_1$, $J_1$ and $K_1$ (and respectively $I_2$, $J_2$ and $K_2$) finite sets, such that, for every $u$ and $v$ in $A_1\bar{\otimes} A_2$ \[u = \bigvee_{i\in I_1} \bigwedge_{j\in J_1} \sum_{k\in K_1}x_1^{(i,j,k)}\otimes x_2^{(i,j,k)}\]  and \[ v = \bigvee_{i'\in I_2} \bigwedge_{j'\in J_2} \sum_{k'\in K_2}y_1^{(i',j',k')}\otimes y_2^{(i',j',k')}\]where $x_1^{(i,j,k)}$ and  $y_1^{(i',j',k')}$ are in $A_1$ and $x_2^{(i,j,k)}$ and  $y_2^{(i',j',k')}$ are in $A_2$. Theorem \ref{dp} yields to \[\bar{\sigma}(u) = \bigvee_{i\in I_1} \bigwedge_{j\in J_1} \sum_{k\in K_1} \sigma(x_1^{(i,j,k)}\otimes x_2^{(i,j,k)}) \]   \[\bar{\tau}^+(v) = \bigvee_{i'\in I_2} \bigwedge_{j'\in J_2} \sum_{k'\in K_2}\tau^-(y_1^{(i',j',k')}\otimes y_2^{(i',j',k')})\] and  \[\bar{\tau}^-(v) = \bigvee_{i'\in I_2} \bigwedge_{j'\in J_2} \sum_{k'\in K_2}\tau^+(y_1^{(i',j',k')}\otimes y_2^{(i',j',k')})\] Then $\pi(u,v)$ is composed by finite infima, superma and sums of $\sigma(x_1^{(i,j,k)}\otimes x_2^{(i,j,k)})\tau^\pm(y_1^{(i',j',k')}\otimes y_2^{(i',j',k')})$ which, according to Corollary \ref{CorTec}, are equal to\[S_1(x_1^{(i,j,k)})T_1^\pm(y_1^{(i',j',k')}) \otimes S_2(x_2^{(i,j,k)})T_2^\pm(y_2^{(i',j',k')})\] These terms are all in $A_1\otimes A_2$ which implies that $\pi(u,v)$ is in $A_1 \bar{\otimes}A_2$ for every $u$ and $v$ in $A_1 \bar{\otimes}A_2$. The  order bounded disjointness preserving bilinear map $\pi$ extends the product on the algebraic tensor product of $A_1\otimes A_2$ to $A_1\bar{\otimes} A_2$ as desired.

It remains to prove that this multiplication is associative. But since it is associative on the algebraic tensor product, a similar and straightforward calculus than the  latter one leads to the desired conclusion and our proof comes to an end.
\end{proof}

It remains open the fact if whether or not the Riesz Tensor product of almost-$f$-algebra is itself an almost-$f$-algebra.

\bibliographystyle{plain}
\bibliography{references}
\end{document}